\documentclass{article}
\textwidth=360pt \textheight=615pt
\usepackage{fancyhdr}
\pagestyle{fancy}
\fancyhf{}
\fancyhead[C]{%
\ifodd\value{page}\relax
{Varieties with two smooth blow up structures}
\else
 \small{ Varieties with two smooth blow up structures  }
\fi}
\fancyfoot[C]{\thepage}

\usepackage{bm}
\setlength{\headheight}{14.49998pt}
\usepackage{hyperref}
\usepackage{graphicx} 
\usepackage{nicematrix}
\usepackage{amsthm,amssymb,amsmath}
\usepackage[T1]{fontenc}
\usepackage[utf8]{inputenc}
\usepackage{hyperref}
\usepackage{lipsum}
\usepackage{mathrsfs}
\usepackage{enumitem}
\usepackage{stmaryrd}
\usepackage{setspace}
\usepackage{yfonts}
\usepackage{float}
\usepackage{mathtools}
\usepackage{parskip}
\usepackage[all,cmtip]{xy}
\usepackage{tikz-cd}
\tikzcdset{row sep/normal=50pt, column sep/normal=50pt}

\newtheorem{lemma}{Lemma}[section]

\newtheorem{theorem}[lemma]{Theorem}

\newenvironment{manualtheorem}[1]{%
  \manualtheoreminner
}{\endmanualtheoreminner}
\usepackage{blindtext}
\usepackage[a4paper, total={6in, 8in}]{geometry}
\usepackage[backend=biber, style=numeric, sorting=nyt]{biblatex}
\addbibresource{name.bib}

\title{Varieties with two smooth blow up structures}
\date{}
\author{Supravat Sarkar\footnote{Princeton University}}
\begin{document}
\maketitle
\begin{abstract}
 We classify smooth projective varieties of Picard rank $2$ which have two structures of blow-up of projective space along smooth subvarieties of different dimensions. This gives a characterization of the so called quadro-cubic Cremona transformation.
\end{abstract}
\begin{center}
\textbf{Keywords}: Blow up, simple Cremona map, quadro-cubic transformation
\end{center}
\begin{center}
\textbf{MSC Number: 14E07} 
\end{center}
\section{Introduction}
 Given a smooth projective variety $X$, there are two standard ways of constructing another smooth projective variety with Picard number one more than $X$. One is to construct a projective bundle over $X$, another is to blow-up a smooth subvariety of codimension at least $2$ in $X$. It is interesting to consider when a smooth projective variety can be constructed by the above procedure in two different ways, in other words, when a smooth projective variety has two different structures of the above kind.
 
 \cite{sato1985varieties} gives classification of smooth projective varieties having two projective bundle structures over projective spaces. \cite{kanemitsu2019extremal} classifies all smooth projective varieties of Picard rank $2$ having two $\mathbb{P}^1$-bundle structures. \cite{bansal2023isomorphism}, \cite{occhetta2022manifolds}, \cite{occhetta2002euler} also has some results regarding varieties having two projective bundle structures.

 Varieties with both projective bundle and blow up structures are also well-studied in the literature. \cite{vats2024correspondence}, \cite{ray2020examples}, \cite{galkin2022projective}, \cite{bansal2024extremal} gives several examples of such varieties. \cite{li2021projective}, \cite{li2024projective} gives some classification results for such varieties under certain conditions.

 The remaining case, smooth projective varieties having two different smooth blow up structures seems to be comparatively less studied. The goal of this paper is to classify smooth projective varieties of Picard rank $2$ that have two structures of blow-up of projective space along smooth subvarieties of different dimensions.

 This problem has connections to Cremona transformations in the following way. If $p:X\to \mathbb{P}^n $ is a blow-up along $Z_1$, $q:X\to \mathbb{P}^n $ is blow-up along $Z_2$, then we have a birational map $\phi= p \circ q^{-1}: \mathbb{P}^n \dashrightarrow \mathbb{P}^n$, that is, a Cremona transformation of $\mathbb{P}^n$. Cremona transformations which becomes a morphism after blowing up a smooth subvariety were studied in \cite{crauder1989cremona}, \cite{crauder1991cremona}. In \cite{crauder1989cremona}, there is an example of a Cremona transformation $\phi$ of $\mathbb{P}^4$ called \textit{quadro-cubic transformation}. The base locus $Z_2$ of $\phi$ is a (smooth) quintic elliptic curve, and the base locus $Z_1$ of $\phi^{-1}$ is a smooth quintic elliptic scroll. We have a commutative diagram:
 \begin{equation*}
\begin{tikzcd} 
{}& X \arrow[dl, "q"'] \arrow[dr, "p"] & \\
\mathbb{P}^n \arrow[dotted, rr, "\phi"] &&  \mathbb{P}^n 
  & {}  
\end{tikzcd}
\end{equation*}
Here $p$, $q$ are blow-ups along $Z_1$, $Z_2$, respectively. So $X$ is a smooth projective variety of Picard rank $2$ which has two structures of blow-up of projective space along smooth subvarieties of different dimensions. The main theorem of this paper asserts that this is the only example, hence this gives a characterization of quadro-cubic transformation.
 
\begin{manualtheorem}{A}\label{A}
    
Let $n$ be a positive integer, $Z_1,$ $Z_2$ be smooth subvarieties of $\mathbb{P}^n$ with dim $Z_1>$ dim $Z_2$. Let $X$ be a smooth projective variety with two contractions: $p:X\to \mathbb{P}^n $ which is blow-up along $Z_1$, $q:X\to \mathbb{P}^n $ which is blow-up along $Z_2$. Then we have $n=4$, the birational map $\phi=p \circ q^{-1}: \mathbb{P}^n \dashrightarrow \mathbb{P}^n$ is a quadro-cubic transformation, $Z_1=Bs(\phi)$ and $Z_2=Bs(\phi^{-1})$.
 \end{manualtheorem}

Here Bs($\phi$) is the base locus/fundamental locus of $\phi$, defined as follows (see also section $2$ of \cite{ein1989some}): There are homogeneous polynomials $f_0,f_1,...,f_n$ of same degree such that $\phi=(f_0,f_1,...,f_n)$. Define Bs($\phi$) to be the closed subscheme of $\mathbb{P}^n$ cut out by $f_0,f_1,...,f_n$. Similarly bs($\phi^{-1}$) is defined.

Note that our assumptions are less restrictive than the assumptions of \cite{ein1989some}. In \cite{ein1989some} they assume that $\phi$ is a \textit{simple Cremona transformation}, that is, the base locus of $\phi$ is smooth. In our assumptions, the reduced base locus of $\phi$ (or $\phi^{-1}$) is smooth, but the base locus may have some multiplicity $a$ (in Lemma \ref{lemma:duoli} we shall see that the multiplicity is same for the base loci of $\phi$ and $\phi^{-1}$). In fact, we prove the multiplicity to be $1$ in Theorem \ref{theorem:a=1}. In \cite{crauder1991cremona}, they show that if Hartshorne's conjecture on complete intersections is true, then $a=1$. We prove $a=1$ in our setup without assuming Hartshorne's conjecture.

It remains to investigate the case dim$Z_1=$ dim$Z_2$. It turns out that if we assume Hartshorne's conjecture, Corollary $2$ in \cite{crauder1991cremona} gives a satisfactory result towards the classification in this case.

\begin{manualtheorem}{B}
    
 (Corollary $2$ in \cite{crauder1991cremona}):

Let $n,$ $m$ be positive integers, $Z_1,$ $Z_2$ be smooth subvarieties of $\mathbb{P}^n$ with dim $Z_1=$ dim $Z_2=m$. Let $X$ be a smooth projective variety with two different contractions: $p:X\to \mathbb{P}^n $ blow-up along $Z_1$, $q:X\to \mathbb{P}^n $ blow-up along $Z_2$. Let $a$ be the multiplicity of base locus of the birational map $p\circ q^{-1}$, and $c$ be the integer defined by: $\phi$ (or $\phi^{-1}$) is given by an $(n+1)$-tuple of homogeneous polynomials of degree $c$. If Hartshorne's conjecture is true, then $a=1$, and one of the following holds:
\begin{enumerate}
    \item $c=2$, $Z_1\cong Z_2$ are Severi varieties, completely classified in \cite{zak1984varieties},
    \item $c=3$ and either \begin{enumerate}
        \item[(a)] $n=3,$ $m=1$, in this case $\phi$ is a cubo-cubic transformation, as in \cite{crauder1989cremona} theorem $2.2(i)$, or
        \item [(b)] $n=7,$ $ m=4$, or
        \item [(c)] $n=11$, $m=7$, or
        \item [(d)] $n=15$, $m=10$,
    \end{enumerate}
    \item $c=4$ and either \begin{enumerate}
        \item [(a)] $n=9$, $m=6$, or 
        \item [(b)] $n=4$, $m=2$, in this case $\phi$ is a quadro-quadric transformation, completely classified in \cite{crauder1989cremona} theorem $3.3(B)$,
    \end{enumerate}
    \item $c=5$, $n=5$, $m=3$, in this case $Z_1$, $Z_2$ are determinantal varieties, each given by $5\times 5$ minors of a $5\times 6$ matrix of linear forms.
\end{enumerate}
\end{manualtheorem}
\section{Basic equalities and divisibility relations involving intersection numbers}
Throughout this article we work over the field $k=\mathbb{C}$ of complex numbers. A \textit{variety} is an integral, separated scheme of finite type over $k$. We will always be under the following setup.

\textbf{Setup:} Let $X, Z_1, Z_2, p, q, \phi$ be as in the hypothesis of the Theorem A. We refer to the diagram in introduction. Let $E_1=Ex(p),$ $E_2=Ex(q)$ be the exceptional divisors of $p,q$, respectively. Let dim$Z_i=m_i$, deg$Z_i=d_i$ for $i=1,2$. We have $d_1,$ $d_2\geq2$, as blow up of $\mathbb{P}^n$ along a linear subvariety has the other contraction a projective bundle structure, not a blow up. Also, $n-2\geq m_1>m_2\geq 1$ and $n\geq 4$.  Let $F_1$ be a line in a fibre of $p$ over $Z_1,$ $F_2$ be a line in a fibre of $q$ over $Z_2.$ Also let $H_1=p^*\mathcal{O}_{\mathbb{P}^n}(1)$, $H_2=q^*\mathcal{O}_{\mathbb{P}^n}(1)$. We can regard the Cartier divisors $E_1, E_2$ as elements of Pic$(X)$. So, $N_1(X)_{\mathbb{Q}}$ has basis $\{F_1, F_2\}$, and $\{H_1, E_1\}$ and $\{H_2, E_2\}$ are both bases of Pic$(X)\cong \mathbb{Z}^2$. We have $H_1.F_1=H_2.F_2=0$, $E_1.F_1=E_2.F_2=-1$. Let $a=H_1.F_2$, $b=H_2.F_1$, $c=E_1.F_2$, $d=E_2.F_1$.

Similar to \cite{li2021projective}, we have
\begin{lemma}\label{lemma:duoli}
    \begin{enumerate}
        \item [(i)] $a=b>0$, $c,d>0$, $E_1\neq E_2$.
        \item [(ii)] $a|cd-1$, and we have

        $$H_1=dH_2-aE_2$$

        $$E_1=\frac{cd-1}{a}H_2-cE_2$$

        in Pic($X$).
    \end{enumerate}
\end{lemma}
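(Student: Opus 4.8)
The plan is to read off all four integers $a,b,c,d$ from the intersection pairing between the curve basis $\{F_1,F_2\}$ and the two divisor bases, using the prescribed values $H_1.F_1=H_2.F_2=0$ and $E_1.F_1=E_2.F_2=-1$, and to reserve the geometric hypotheses (nefness of the $H_i$, the Fano condition, and the integrality of $\mathrm{Pic}(X)$) only for the positivity and divisibility assertions. The relations in (ii) are then pure linear algebra, while $a=b$ together with $a\mid cd-1$ will come from the fact that $\{H_1,E_1\}$ and $\{H_2,E_2\}$ are \emph{integral} bases of the same rank-two lattice.

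First I would prove $a,b>0$. Because $p$ is the contraction of the extremal ray $\mathbb{R}_{\ge 0}[F_1]$, the class $H_1=p^*\mathcal{O}(1)$ is nef and vanishes precisely on that ray; since $F_2$ generates the other extremal ray it is not contracted by $p$, so $a=H_1.F_2>0$, and symmetrically $b=H_2.F_1>0$. For $c,d>0$ I would use the blow-up formula $K_X=-(n+1)H_i+(n-m_i-1)E_i$ for $i=1,2$. Intersecting the two expressions for $-K_X$ with $F_2$ gives $(n+1)a-(n-m_1-1)c=-K_X.F_2=n-m_2-1$, hence $c=\bigl((n+1)a-(n-m_2-1)\bigr)/(n-m_1-1)$; here the denominator is $\ge 1$ because $m_1\le n-2$, and the numerator is $\ge (n+1)-(n-2)=3>0$ because $a\ge 1$ and $n-m_2-1\le n-2$, so $c>0$. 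The same computation against $F_1$ yields $d>0$.

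For part (ii) I would expand $H_1$ and $E_1$ in the basis $\{H_2,E_2\}$ and pin down the coefficients by pairing against $F_1$ and $F_2$; this gives $H_1=\tfrac{ad}{b}H_2-aE_2$ and $E_1=\tfrac{cd-1}{b}H_2-cE_2$. The crux is the determinant: the resulting change-of-basis matrix has determinant $-a/b$, and since both $\{H_1,E_1\}$ and $\{H_2,E_2\}$ are $\mathbb{Z}$-bases of $\mathrm{Pic}(X)$ this determinant must be a unit, i.e. $\pm 1$; combined with $a,b>0$ this forces $a=b$. Substituting $b=a$ turns the two formulas into exactly the displayed relations, and the requirement that the coefficient $\tfrac{cd-1}{a}$ of $E_1$ be an integer (as $\{H_2,E_2\}$ is an integral basis) gives $a\mid cd-1$. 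Finally $E_1\neq E_2$ is immediate, since $E_1.F_1=-1$ whereas $E_2.F_1=d>0$.

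The step I expect to be the real content, rather than bookkeeping, is the determinant/integrality argument producing $a=b$: everything else is either a short positivity check or a routine solution of a $2\times 2$ linear system, but the equality $a=b$ genuinely uses that the two blow-up structures endow $\mathrm{Pic}(X)$ with two distinct integral bases, and it cannot be detected from the rational intersection numbers alone.
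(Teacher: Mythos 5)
Your proposal is correct, and its core coincides with the paper's argument: both derive $a=b$ and the displayed formulas from the fact that $\{H_1,E_1\}$ and $\{H_2,E_2\}$ are two integral bases of $\mathrm{Pic}(X)$, so the change-of-basis matrix (whose entries are pinned down by pairing against $F_1,F_2$) must have determinant $\pm1$, and integrality of its entries yields $a\mid cd-1$. The one place you genuinely diverge is the positivity of $c$ and $d$: the paper argues that $c\le 0$ would make $-E_1$ nonnegative on both extremal rays, hence nef, contradicting effectivity of $E_1$; you instead intersect the two expressions $-K_X=(n+1)H_i-(n-m_i-1)E_i$ with $F_2$ (resp.\ $F_1$) to solve for $c$ (resp.\ $d$) explicitly and check the numerator and denominator are positive. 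Both are valid; your route has the small bonus of already producing the identities of Lemma \ref{lemma:katz} (iii)--(iv) as a byproduct, while the paper's nefness argument is independent of the adjunction formula. Your proofs of $a,b>0$ (via $H_i=p^*\mathcal{O}(1)$ being nef and null exactly on the contracted ray) and of $E_1\ne E_2$ (comparing intersection numbers with a fiber line) match the paper's in substance.
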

\begin{proof}
Let $A\in GL_2(\mathbb{Z})$ be such that $\begin{pmatrix}
    H_1 \\
    E_1
\end{pmatrix}=A \begin{pmatrix}
    H_2 \\
    E_2
\end{pmatrix}$. The intersection numbers imply: $$\begin{pmatrix}
    0 & a \\
    -1 & c
\end{pmatrix}=A \begin{pmatrix}
    b & 0 \\
    d & -1
\end{pmatrix}.$$
Taking determinants: $a=-b(detA)$. Note that $a=$ deg $p_*F_2>0$, $b=$ deg $q_*F_1>0$. This forces det$A=-1$, $a=b>0$. If $c\leq0$, $-E_1$ would be nef. But this is impossible as $E_1$ is effective. So, $c>0$. Similarly, $d>0$. If $E_1=E_2$, then $c=E_1.F_2=E_2.F_2=-1<0$, a contradiction. So, $E_1\neq E_2$. This proves $(i)$.

For $(ii)$, note that $A=\begin{pmatrix}
    0 & a \\
    -1 & c
\end{pmatrix} \begin{pmatrix}
    a & 0 \\
    d & -1
\end{pmatrix}^{-1}=\begin{pmatrix}
    d & -a \\
    \frac{cd-1}{a} & -c
\end{pmatrix}$.
\end{proof}
\begin{lemma}\label{lemma:katz}
    \begin{enumerate}
        \item [(i)] $(d-1)(n+1)=(n-m_1-1)\frac{cd-1}{a}$,
        \item [(ii)] $(c-1)(n+1)=(n-m_2-1)\frac{cd-1}{a}$,
        \item [(iii)] $\frac{c}{a}+\frac{1}{a}. \frac{n-m_2-1}{n-m_1-1}=\frac{n+1}{n-m_1-1}$,
        \item [(iv)]$\frac{d}{a}+\frac{1}{a} .\frac{n-m_1-1}{n-m_2-1}=\frac{n+1}{n-m_2-1}$,
        \item [(v)]$c>d\geq 2$.
    \end{enumerate}
\end{lemma}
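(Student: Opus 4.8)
The plan is to derive all five parts from the canonical bundle formula for a blow-up along a smooth center. Recall that if $\pi\colon \widetilde{Y}\to Y$ is the blow-up of a smooth variety along a smooth subvariety of codimension $r$ with exceptional divisor $E$, then $K_{\widetilde{Y}}=\pi^*K_Y+(r-1)E$. Applying this to $p$ (where $Z_1$ has codimension $n-m_1$) and to $q$ (where $Z_2$ has codimension $n-m_2$), and using $K_{\mathbb{P}^n}=\mathcal{O}_{\mathbb{P}^n}(-(n+1))$, I obtain the two expressions
\[
K_X=-(n+1)H_1+(n-m_1-1)E_1=-(n+1)H_2+(n-m_2-1)E_2
\]
in $\mathrm{Pic}(X)$. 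This canonical bundle identity is the only geometric input; everything after it is linear algebra together with the positivity facts from Lemma \ref{lemma:duoli}.

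First I would intersect this identity with the curve classes $F_1$ and $F_2$. Using $H_1.F_1=H_2.F_2=0$, $E_1.F_1=E_2.F_2=-1$, $H_1.F_2=H_2.F_1=a$, $E_1.F_2=c$, $E_2.F_1=d$, intersecting with $F_1$ and with $F_2$ yields the pair of scalar relations
\[
(n+1)a=(n-m_1-1)+(n-m_2-1)d,\qquad (n+1)a=(n-m_2-1)+(n-m_1-1)c.
\]
Dividing the first by $a(n-m_2-1)$ produces (iv), and dividing the second by $a(n-m_1-1)$ produces (iii). For (i) and (ii) I would instead equate the two expressions for $K_X$ and substitute the change-of-basis relations $H_1=dH_2-aE_2$ and $E_1=\tfrac{cd-1}{a}H_2-cE_2$ from Lemma \ref{lemma:duoli}; comparing the coefficient of $H_2$ on the two sides gives $(n-m_1-1)\tfrac{cd-1}{a}=(n+1)(d-1)$, which is (i). Substituting the symmetric relations $H_2=cH_1-aE_1$, $E_2=\tfrac{cd-1}{a}H_1-dE_1$ (obtained by inverting the change of basis, whose determinant is $-1$) and comparing the coefficient of $H_1$ gives (ii). Equivalently, (i) and (ii) drop out of the two scalar relations above by eliminating $(n-m_2-1)d$ and $(n-m_1-1)c$ respectively.

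Finally I would establish (v). To see $d\geq 2$, suppose $d=1$; then (i) reads $0=(n-m_1-1)\tfrac{c-1}{a}$, and since $n-m_1-1>0$ and $a>0$ this forces $c=1$, whence $\tfrac{cd-1}{a}=0$ and the second relation of Lemma \ref{lemma:duoli} gives $E_1=-E_2$, contradicting the effectivity of the exceptional divisors. Hence $d\geq 2$, so $cd-1>0$, and then (ii) shows $c\geq 2$ as well. With $c,d\geq 2$ I may divide (i) by (ii) to get $\tfrac{c-1}{d-1}=\tfrac{n-m_2-1}{n-m_1-1}$; since $m_1>m_2$ forces $n-m_2-1>n-m_1-1>0$, the right-hand side exceeds $1$, so $c-1>d-1$, i.e.\ $c>d$.

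The computations are routine once the canonical bundle formula is in hand. The only step requiring genuine care is (v): there the inequality $c>d$ is exactly the arithmetic shadow of the hypothesis $m_1>m_2$, and the bound $d\geq 2$ rests on effectivity of $E_1$ and $E_2$ rather than on any numerical identity, so I expect that to be the one place where the argument is not purely formal.
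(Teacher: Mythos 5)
Your proposal is correct and follows essentially the same route as the paper: both rest on equating the two expressions $K_X=-(n+1)H_1+(n-m_1-1)E_1=-(n+1)H_2+(n-m_2-1)E_2$ and then extracting (i)--(iv) via the change of basis from Lemma \ref{lemma:duoli} (your intersection with $F_1,F_2$ is just the dual-basis form of the same coefficient comparison). The only difference is in (v), where the paper deduces $c>d$ directly from (i) and (ii) and then rules out $d=1$, while you first exclude $d=1$ via effectivity of $E_1+E_2$ and then divide (ii) by (i); your ordering is equally valid and in fact disposes of the corner case $c=d=1$ a bit more explicitly.
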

\begin{proof}
    We have $K_X=p^*K_{\mathbb{P}^n}+(n-m_1-1)E_1=-(n+1)H_1+(n-m_1-1)E_1$. Similarly, $K_X=-(n+1)H_2+(n-m_2-1)E_2$. Equating them and using Lemma \ref{lemma:duoli} $(ii)$, we get $(i)$ and $(iii)$. By symmetry $(ii)$ and $(iv)$ follows.

    Since $m_1>m_2$, $(i)$ and $(ii)$ implies $c>d$. If $d=1$, $(i)$ shows $cd=1$, so $c=1$, contradiction to $c>d$. So, $d\geq 2$. This proves $(v)$.
\end{proof}
\begin{lemma}\label{lemma:leray}
If $E$ is a vector bundle of rank $r+1$ on a smooth projective variety $X$, then under the natural pullback we have,$\vspace{1mm}$ $A^{\ast}(\mathbb{P}(E))\cong {A^{\ast}(X)[u]}/(\sum_{i = 0}^{r+1}(-1)^{i}{c_{i}(E)u^{r+1-i}})$ as $A^{\ast}(X)$-algebra. The isomorphism is given $\vspace{1mm}$ by $[\mathcal{O}_{\mathbb{P}(E)}(1)]\longleftarrow u$.
\end{lemma}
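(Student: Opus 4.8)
The statement is the Chow-ring analogue of the Leray--Hirsch theorem, and the plan is to deduce it from the projective bundle theorem together with the Whitney sum formula. I will use Grothendieck's convention $\mathbb{P}(E)=\operatorname{Proj}(\operatorname{Sym}^\bullet E)$, under which $\mathbb{P}(E)$ carries a universal quotient $\pi^{\ast}E \twoheadrightarrow \mathcal{O}_{\mathbb{P}(E)}(1)$; this is precisely the convention that produces the alternating signs appearing in the asserted relation. Throughout, write $\pi\colon \mathbb{P}(E)\to X$ for the projection and $u=c_1(\mathcal{O}_{\mathbb{P}(E)}(1))\in A^{1}(\mathbb{P}(E))$, so that the $A^{\ast}(X)$-algebra structure on $A^{\ast}(\mathbb{P}(E))$ is given by $\pi^{\ast}$ and multiplication by powers of $u$.

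First I would record the freeness statement, which is the only substantive input: $A^{\ast}(\mathbb{P}(E))$ is a free $A^{\ast}(X)$-module with basis $1,u,u^{2},\dots,u^{r}$. This is the projective bundle theorem (Fulton, \emph{Intersection Theory}, Theorem 3.3), which I would cite. For orientation, its proof proceeds by Noetherian induction via the localization exact sequence, reducing surjectivity to the case where $E$ is trivial (so $\mathbb{P}(E)\cong X\times\mathbb{P}^{r}$, and the classes of linear subspaces of the fibre generate), while injectivity and hence freeness follows from the Segre-class machinery: the pushforward operators $\alpha\mapsto \pi_{\ast}(u^{j}\cap\pi^{\ast}\alpha)$ furnish a (triangular) left inverse to the assignment $(\alpha_0,\dots,\alpha_r)\mapsto \sum_i u^{i}\cdot\pi^{\ast}\alpha_i$, exhibiting the $u^{i}$ as a basis.

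Next I would establish the relation. The universal exact sequence $0\to S\to \pi^{\ast}E\to \mathcal{O}_{\mathbb{P}(E)}(1)\to 0$ has kernel $S$ of rank $r$, so $c_{r+1}(S)=0$. The Whitney formula gives $\pi^{\ast}c(E)=c(S)\,(1+u)$, hence $c(S)=\pi^{\ast}c(E)\cdot\sum_{k\geq 0}(-1)^{k}u^{k}$; extracting the degree-$(r+1)$ component and clearing the overall sign $(-1)^{r+1}$ yields $\sum_{i=0}^{r+1}(-1)^{i}\,\pi^{\ast}c_i(E)\,u^{r+1-i}=0$ in $A^{\ast}(\mathbb{P}(E))$.

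Finally I would assemble the isomorphism. Define a graded $A^{\ast}(X)$-algebra homomorphism $\Phi\colon A^{\ast}(X)[u]\to A^{\ast}(\mathbb{P}(E))$ by $\pi^{\ast}$ on $A^{\ast}(X)$ and $u\mapsto c_1(\mathcal{O}_{\mathbb{P}(E)}(1))$. By the projective bundle theorem $\Phi$ is surjective, and by the previous step the monic polynomial $R(u)=\sum_{i=0}^{r+1}(-1)^{i}c_i(E)\,u^{r+1-i}$ (whose leading term is $u^{r+1}$, since $c_0(E)=1$) lies in $\ker\Phi$, so $\Phi$ descends to a map $\bar{\Phi}\colon A^{\ast}(X)[u]/(R(u))\to A^{\ast}(\mathbb{P}(E))$. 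Because $R$ is monic of degree $r+1$, the quotient $A^{\ast}(X)[u]/(R(u))$ is a free $A^{\ast}(X)$-module with basis $1,u,\dots,u^{r}$, and $\bar{\Phi}$ carries this basis to the basis $1,u,\dots,u^{r}$ of $A^{\ast}(\mathbb{P}(E))$; hence $\bar{\Phi}$ is an isomorphism of $A^{\ast}(X)$-modules, and being a ring homomorphism, an isomorphism of $A^{\ast}(X)$-algebras, matching $u\mapsto[\mathcal{O}_{\mathbb{P}(E)}(1)]$ as claimed. The one genuine obstacle is the freeness in the projective bundle theorem itself; everything else is the formal bookkeeping of Chern classes, where the only real care needed is to match the chosen $\mathbb{P}(E)$-convention to the sign pattern in the statement.
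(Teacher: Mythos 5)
Your proof is correct. For this lemma the paper itself offers no argument at all --- it simply cites \cite{La}, Appendix A --- so you have supplied the standard derivation that the reference is standing in for: the projective bundle theorem gives freeness of $A^{\ast}(\mathbb{P}(E))$ over $A^{\ast}(X)$ on $1,u,\dots,u^{r}$, the Whitney formula applied to the tautological sequence $0\to S\to\pi^{\ast}E\to\mathcal{O}_{\mathbb{P}(E)}(1)\to 0$ produces the Grothendieck relation, and the two facts together identify $A^{\ast}(\mathbb{P}(E))$ with the quotient of $A^{\ast}(X)[u]$ by the monic relation. Your sign bookkeeping is right: with the quotient (Grothendieck) convention for $\mathbb{P}(E)$, extracting the degree-$(r+1)$ part of $c(S)=\pi^{\ast}c(E)\sum_{k}(-1)^{k}u^{k}$ and multiplying by $(-1)^{r+1}$ gives exactly $\sum_{i=0}^{r+1}(-1)^{i}c_i(E)u^{r+1-i}=0$, matching the alternating signs in the statement (and consistent with how the lemma is later applied to $E_2=\mathbb{P}(N_2^{\ast})$ in the proof of the Main Theorem). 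The only genuinely non-formal input is the freeness in the projective bundle theorem, which you correctly isolate and cite rather than reprove; everything else is formal, so your write-up is a legitimate self-contained replacement for the paper's citation.
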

\begin{proof}
See \cite{lazarsfeld2017positivity}, Appendix A.
\end{proof}
\begin{lemma}\label{lemma:EH}
    \begin{enumerate}
        \item [(i)] $E_2^i H_2^{n-i}=0$ if $0<i<n-m_2$, $E_2^{n-m_2} H_2^{m_2}=(-1)^{n-m_2-1}d_2$,
        \item [(ii)] $E_1^i H_1^{n-i}=0$ if $0<i<n-m_1$, $E_1^{n-m_1} H_1^{m_1}=(-1)^{n-m_1-1}d_1$,
        \item[(iii)] $a^{n-m_2}|cd-1$,
        \item[(iv)] $d_2<(\frac{d}{a})^{n-m_2}$.
    \end{enumerate}
\end{lemma}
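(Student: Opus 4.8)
The plan is to establish (i) and (ii) by standard blow-up intersection theory, deduce (iii) purely formally from (i)--(ii) together with Lemma \ref{lemma:duoli}, and obtain (iv) from (i) by a positivity argument.

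For (i), write $N=N_{Z_2/\mathbb{P}^n}$ for the normal bundle, of rank $\rho=n-m_2\geq 2$, and identify the exceptional divisor with the projective bundle $\pi\colon E_2=\mathbb{P}(N)\to Z_2$. Let $\zeta=c_1(\mathcal{O}_{E_2}(1))$ be the tautological class described by Lemma \ref{lemma:leray}. The two basic restriction formulas for a blow-up are $\mathcal{O}_X(E_2)|_{E_2}=\mathcal{O}_{E_2}(-1)$, i.e. $E_2|_{E_2}=-\zeta$, and $H_2|_{E_2}=\pi^*h$, where $h$ is the hyperplane class of $Z_2\subset\mathbb{P}^n$. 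By the projection formula, for $i>0$,
\[
E_2^i H_2^{n-i}=\int_{E_2}(-\zeta)^{i-1}(\pi^*h)^{n-i}=(-1)^{i-1}\int_{Z_2}\big(\pi_*\zeta^{i-1}\big)\,h^{n-i}.
\]
Since $\pi$ has relative dimension $\rho-1=n-m_2-1$, the Segre-class identities give $\pi_*\zeta^{k}=0$ for $k<n-m_2-1$ and $\pi_*\zeta^{\,n-m_2-1}=[Z_2]$. Hence the integral vanishes for $0<i<n-m_2$, while for $i=n-m_2$ it equals $(-1)^{n-m_2-1}\int_{Z_2}h^{m_2}=(-1)^{n-m_2-1}d_2$. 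This is (i); part (ii) is identical after exchanging the roles of $(p,E_1,H_1,m_1,d_1)$ and $(q,E_2,H_2,m_2,d_2)$.

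For (iii), first invert the relations of Lemma \ref{lemma:duoli}(ii): since the change-of-basis matrix has determinant $-1$, one gets $E_2=\tfrac{cd-1}{a}H_1-dE_1$. Intersecting with $H_1^{n-1}$ and using $H_1^n=1$ together with $E_1H_1^{n-1}=0$ (which is (ii) with $i=1$, valid because $n-m_1\geq 2$) yields $E_2\cdot H_1^{n-1}=\tfrac{cd-1}{a}$. On the other hand, substituting $H_1=dH_2-aE_2$ and expanding gives
\[
E_2\cdot H_1^{n-1}=\sum_{j=0}^{n-1}\binom{n-1}{j}d^{\,n-1-j}(-a)^j\,E_2^{\,j+1}H_2^{\,n-1-j}.
\]
By (i) the factor $E_2^{\,j+1}H_2^{\,n-1-j}$ vanishes unless $j+1\geq n-m_2$, so every surviving term carries a factor $a^j$ with $j\geq n-m_2-1$. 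Thus $a^{\,n-m_2-1}$ divides the integer $\tfrac{cd-1}{a}$, which is exactly $a^{\,n-m_2}\mid cd-1$.

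For (iv), put $t=d/a$ and note that $tH_2-E_2=\tfrac1a H_1$ is nef, being a positive multiple of the pullback $H_1=p^*\mathcal{O}(1)$. Expanding $(tH_2-E_2)^{n-m_2}H_2^{m_2}$ and keeping only the terms surviving by (i) (the pure $H_2^n$ term and the $E_2^{n-m_2}H_2^{m_2}$ term) gives
\[
\frac{1}{a^{\,n-m_2}}H_1^{\,n-m_2}H_2^{\,m_2}=(tH_2-E_2)^{n-m_2}H_2^{m_2}=t^{\,n-m_2}-d_2.
\]
Since $H_1,H_2$ are nef this is $\geq 0$, giving $d_2\leq (d/a)^{n-m_2}$; the content of (iv) is the strict inequality, i.e. $H_1^{\,n-m_2}H_2^{\,m_2}>0$. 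I would prove this geometrically: $|H_2|$ is base-point-free (it defines $q$), so by Bertini a complete intersection $W$ of $m_2$ general members is irreducible of dimension $n-m_2$, with $[W]=H_2^{m_2}$ and $H_1^{\,n-m_2}H_2^{\,m_2}=(H_1|_W)^{n-m_2}$. Being general, $W$ meets the dense open set $X\setminus E_1$, on which $p$ restricts to the isomorphism $X\setminus E_1\cong\mathbb{P}^n\setminus Z_1$; hence $p|_W$ is generically injective, $H_1|_W=(p|_W)^*\mathcal{O}(1)$ is nef and big, and its top self-intersection is strictly positive.

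The routine parts are (i)--(iii): once the restriction formulas and Segre vanishing are in place, (iii) is just bookkeeping of powers of $a$. The main obstacle is the strictness in (iv). The inequality $d_2\leq (d/a)^{n-m_2}$ is formal, but upgrading $\geq$ to $>$ cannot be seen at the level of numerical classes alone: it requires the bigness of $H_1|_W$, for which I use that $p$ is birational and that a general complete intersection in $|H_2|$ is not contained in the exceptional divisor $E_1$. One could alternatively invoke that $H_1$ is nef and big directly, but the cleanest route is the Bertini/generic-finiteness argument above.
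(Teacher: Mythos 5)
Your proposal is correct, and it is worth recording how it differs from the paper. For parts (i), (ii) and (iv) the paper does not argue at all: it cites the first paragraph of the proof of Formulae 0.3 and Formula 0.3.v in \cite{CK89}. Your projection-formula/Segre-class computation for (i)--(ii) and your nefness argument for (iv) are faithful self-contained reconstructions of exactly those facts, and your treatment of the strict inequality in (iv) (a general complete intersection $W$ in $|H_2|$ is not contained in $E_1$, so $H_1|_W$ is nef and big) correctly isolates the one point that is not purely numerical; equivalently one can just say $H_1$ and $H_2$ are nef and big, but your Bertini argument is the cleanest elementary route. The only part the paper actually proves is (iii), and there your computation is a genuine (and slightly better) variant: the paper expands $0=E_1H_1^{n-1}$ in the $(H_2,E_2)$-basis, obtains $a^{n-m_2}\mid (cd-1)d^{n-1}$, and must then invoke $\gcd(a,d)=1$ (from $a\mid cd-1$) to strip off the factor $d^{n-1}$; you instead compute $E_2\cdot H_1^{n-1}=\tfrac{cd-1}{a}$ from the inverted change of basis $E_2=\tfrac{cd-1}{a}H_1-dE_1$ and read off the divisibility directly from the surviving powers of $a$ in the expansion, so no gcd step is needed. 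Both routes are valid and rest on the same vanishing statements from (i)--(ii).
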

\begin{proof}
 $(i)$ and $(ii)$ are proven in the 1st paragraph of the proof of  Formulae 0.3 in \cite{crauder1989cremona}. $(iv)$ is Formula 0.3.v in \cite{crauder1989cremona}. We prove $(iii)$.

 We have $$\hspace{-260pt}0=E_1H_1^{n-1}\text{   (by } (ii))$$ $$\hspace{-180pt}=(\frac{cd-1}{a}H_2-cE_2)(dH_2-aE_2)^{n-1}$$ $$=(\frac{cd-1}{a}H_2-cE_2)(d^{n-1}H_2^{n-1}+\sum_{i=0}^{m_2}{{n-1}\choose i}d^i(-a)^{n-1-i}H_2^i E_2^{n-1-i})\text{  (by }(i))$$ $$\hspace{-200pt}\equiv \frac{(cd-1)d^{n-1}}{a}(\text{mod } a^{n-m_2-1}).$$ So, $a^{n-m_2}|(cd-1)d^{n-1}$. Since $a|cd-1$, we have gcd$(a,d)=1$. So, $a^{n-m_2}|cd-1$.
\end{proof}
\begin{lemma} \label{lemma:estimate}
    \begin{enumerate}
        \item [(i)] $(n-m_1-1)(n-m_2-1)a^{n-m_2-1}|a(n+1)^2-(n+1)(2n-2-m_1-m_2)$, $a(n+1)^2-(n+1)(2n-2-m_1-m_2)>0$,
        \item[(ii)] $(n+1)^2>a^{n-m_2-2}(n-m_2-1)(n-m_1-1)\geq a^{n-m_1-1}(n-m_1)(n-m_1-1)$. 
    \end{enumerate}
\end{lemma}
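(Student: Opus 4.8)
The plan is to reduce both statements to a single algebraic identity. Set $K := \frac{cd-1}{a}$, which is a positive integer by Lemma \ref{lemma:duoli}(ii) (indeed $c,d\geq 2$ by Lemma \ref{lemma:katz}(v), so $cd-1>0$), and abbreviate $e := n-m_1-1$, $f := n-m_2-1$. Then $e\geq 1$, $f\geq 2$, and $f-e = m_1-m_2\geq 1$, so $f\geq e+1$. First I would rewrite Lemma \ref{lemma:katz}(i),(ii) as $(d-1)(n+1)=eK$ and $(c-1)(n+1)=fK$, solve for $c$ and $d$, and substitute into the defining relation $cd-1=aK$. Expanding $cd$ and clearing the denominator $(n+1)^2$ should collapse to the master identity
$$ ef\,K \;=\; a(n+1)^2 - (e+f)(n+1). $$
Since $e+f = 2n-2-m_1-m_2$, the right-hand side is exactly the quantity $a(n+1)^2-(n+1)(2n-2-m_1-m_2)$ appearing in part (i).

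Part (i) then follows quickly. Positivity is immediate, since $ef\,K$ is a product of positive integers. For the divisibility the key input is Lemma \ref{lemma:EH}(iii): from $a^{n-m_2}\mid cd-1=aK$ I get $a^{n-m_2-1}\mid K$, i.e. $a^{f}\mid K$. Hence $ef\,a^{f}\mid ef\,K$, and because $a^{f}=a^{n-m_2-1}$ this reads precisely as $(n-m_1-1)(n-m_2-1)a^{n-m_2-1}$ dividing $ef\,K$, which equals the target quantity by the master identity.

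For part (ii) I would reuse $K\geq a^{f}$ (from $a^{f}\mid K$ and $K>0$). Since the master identity gives $ef\,K = a(n+1)^2-(e+f)(n+1) < a(n+1)^2$, the bound $K\geq a^{f}$ yields $ef\,a^{f}\leq ef\,K < a(n+1)^2$, hence $a^{f-1}ef < (n+1)^2$; rewriting $f-1 = n-m_2-2$ gives the left inequality of (ii). The right inequality, $a^{f-1}fe \geq a^{e}(e+1)e$, is pure monotonicity: after dividing by $e>0$ it becomes $a^{f-1}f\geq a^{e}(e+1)$, and since $a\geq 1$ with $f-1\geq e$ we have $a^{f-1}\geq a^{e}$, while $f\geq e+1$; multiplying these two facts closes the inequality.

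The only genuinely delicate point is the bookkeeping: deriving the master identity cleanly, and then matching the single surviving power $a^{f}\mid K$ against the divisor $a^{n-m_2-1}$ in part (i) and the exponent $a^{n-m_2-2}$ in part (ii). Once the identity $ef\,K = a(n+1)^2-(e+f)(n+1)$ is established and combined with Lemma \ref{lemma:EH}(iii), every remaining step is a one-line divisibility or a monotone estimate, so I expect no serious obstacle beyond careful tracking of the index shifts between $n-m_i$, $n-m_i-1$, and $n-m_i-2$.
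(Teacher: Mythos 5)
Your proposal is correct and follows essentially the same route as the paper: your ``master identity'' $efK=a(n+1)^2-(e+f)(n+1)$ is exactly the paper's displayed formula for $cd-1$ rearranged, and both arguments then combine it with Lemma \ref{lemma:EH}(iii) for the divisibility, with positivity of $2n-2-m_1-m_2$ for the upper bound, and with the monotonicity $n-m_2-2\geq n-m_1-1$ for the second inequality in (ii).
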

\begin{proof}
    $(i)$: Using Lemma \ref{lemma:katz} $(iii)$ and $(iv)$, we get $$cd-1=\frac{a^2(n+1)^2-a(n+1)(2n-2-m_1-m_2)}{(n-m_1-1)(n-m_2-1)}.$$ Since $c>d\geq 2$ by Lemma \ref{lemma:katz} $(v)$, we see that $cd-1>0$. So, $a(n+1)^2-(n+1)(2n-2-m_1-m_2)>0$. Now Lemma \ref{lemma:EH} $(iii)$ implies $(i)$. 

    $(ii)$ follows from $(i)$, by noting that $m_1+m_2<2n-2$ as $m_1, m_2<n-1$ and $m_2+1\leq m_1$.
\end{proof}
For a compact complex manifold $Y$ and an integer $i$, denote dim$_{\mathbb{C}}$$H^{i}(Y, \mathbb{C})$ by $h^i(Y)$.
Let $a_i=h^{2i}(Z_1)$, $b_i=h^{2i}(Z_2)$.
\begin{lemma}\label{lemma:betti}
    \begin{enumerate}
        \item [(i)] $a_i\neq 0 \Longleftrightarrow 0\leq i\leq m_1$, $b_i\neq 0 \Longleftrightarrow 0\leq i\leq m_2$,
        \item[(ii)] $a_i-a_{i-(n-m_1-1)}=b_i-b_{i-(n-m_1-1)}$,
        \item[(iii)] $a_i=b_i$ if $0\leq i\leq n-m_1-2$,
        \item[(iv)] If $m_1\geq \frac{3n-2}{4}$, then $m_2\leq n-m_1-2$. 
    \end{enumerate}
\end{lemma}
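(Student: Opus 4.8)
The plan is to compute the even Betti numbers of $X$ in two ways via the classical blow-up formula for cohomology, and then feed the resulting identity into the Barth--Larsen theorem. Recall that if $\pi\colon \widetilde Y\to Y$ is the blow-up of a smooth projective variety along a smooth center of codimension $c$, there is an isomorphism $H^{k}(\widetilde Y;\mathbb{Q})\cong H^{k}(Y;\mathbb{Q})\oplus\bigoplus_{j=1}^{c-1}H^{k-2j}(\text{center};\mathbb{Q})$, in which every shift $2j$ is even. Applying this to $p$ (center $Z_1$, codimension $n-m_1$) and to $q$ (center $Z_2$, codimension $n-m_2$), taking $k=2i$, and cancelling the common contribution $h^{2i}(\mathbb{P}^n)=1$ for $0\le i\le n$, I obtain for every $i$ the master identity
\[
\sum_{j=1}^{n-m_1-1} a_{i-j} \;=\; \sum_{j=1}^{n-m_2-1} b_{i-j},
\]
with the convention $a_k=b_k=0$ for $k<0$. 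Equivalently, writing $P(x)=\sum_i a_ix^i$ and $Q(x)=\sum_i b_ix^i$ for the (even) Poincar\'e polynomials, this reads $P(x)(1-x^{\,n-m_1-1})=Q(x)(1-x^{\,n-m_2-1})$. This single relation drives the whole lemma.

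For (i) I would use Hard Lefschetz: since $Z_1$ is smooth, projective and connected one has $a_0=1$, and for the hyperplane class $h$ of $Z_1\subset\mathbb{P}^n$ the top power satisfies $h^{m_1}=\deg(Z_1)\,[\mathrm{pt}]\neq0$; hence $h^{i}\neq0$ in $H^{2i}(Z_1)$ for $0\le i\le m_1$, forcing $a_i\ge1$, while $a_i=0$ outside $[0,m_1]$ for degree reasons, and likewise for $Z_2$. For (ii) I would subtract the master identity at index $i-1$ from the one at index $i$; the two partial sums telescope and, after reindexing, leave
\[
a_i-a_{\,i-(n-m_1-1)} \;=\; b_i-b_{\,i-(n-m_2-1)}.
\]
Part (iii) is then immediate: for $0\le i\le n-m_1-2$ both shifted indices $i-(n-m_1-1)$ and $i-(n-m_2-1)$ are negative (the latter since $n-m_2-1>n-m_1-1$), so the shifted terms vanish and $a_i=b_i$.

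The real content is (iv), and here I would invoke the Barth--Larsen theorem: for a smooth $m_1$-dimensional $Z_1\subset\mathbb{P}^n$ the restriction $H^{k}(\mathbb{P}^n;\mathbb{Q})\to H^{k}(Z_1;\mathbb{Q})$ is an isomorphism for $k\le 2m_1-n$. The hypothesis $m_1\ge\frac{3n-2}{4}$ is exactly the inequality $2(n-m_1-1)\le 2m_1-n$, so it guarantees $H^{2(n-m_1-1)}(Z_1;\mathbb{Q})\cong H^{2(n-m_1-1)}(\mathbb{P}^n;\mathbb{Q})$, which has rank one; hence $a_{\,n-m_1-1}=1$. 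Evaluating (ii) at $i=n-m_1-1$, where the first subtracted term is $a_0=1$ and the $b$-term has negative index, yields $b_{\,n-m_1-1}=a_{\,n-m_1-1}-a_0=0$. By (i) this forces $n-m_1-1\notin[0,m_2]$, and since $1\le n-m_1-1$ (because $m_1\le n-2$) this can only mean $n-m_1-1>m_2$, i.e.\ $m_2\le n-m_1-2$. The one delicate point, which I expect to be the main thing to get right rather than any deep input, is lining up the Barth--Larsen range precisely with the numerical hypothesis and checking that the critical index $n-m_1-1$ lands in $[1,n-1]$, so that the computed vanishing $b_{\,n-m_1-1}=0$ genuinely contradicts the nonvanishing of $b_i$ throughout $[0,m_2]$.
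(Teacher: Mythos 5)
Your proof is correct and follows essentially the same route as the paper: the blow-up formula for cohomology applied to both $p$ and $q$, telescoping the resulting identity for (ii)--(iii), and Barth--Larsen at the index $n-m_1-1$ for (iv). Note that your version of (ii), namely $a_i-a_{i-(n-m_1-1)}=b_i-b_{i-(n-m_2-1)}$, is the correct one (and is what the paper actually uses later, e.g.\ $a_i-a_{i-2}=b_i-b_{i-4}$ when $n=9$, $m_1=6$, $m_2=4$); the printed statement's $b_{i-(n-m_1-1)}$ is a typo.
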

\begin{proof}
 $(i)$ follows from the general fact that for a compact Kahler manifold $Y$ of complex dimension $m$, we have $H^{2i}(Y,\mathbb{C})\neq 0$ for $0\leq i\leq m$.
 
 $(ii)$: By {\cite[Theorem 7.31]{MR2451566}}, $$H^k(X,\mathbb{Z})\cong H^k(\mathbb{P}^n,\mathbb{Z})\oplus(\oplus_{i=0}^{n-m_1-2}H^{k-2i-2}(Z_1,\mathbb{Z})).$$
 So, for any $k$ we have $$h^{2k}(X)=\sum_{i=0}^{n-m_1-2}h^{2k-2i-2}(Z_1)+h^{2k}(\mathbb{P}^n)=\sum_{i=0}^{n-m_1-2}a_{k-i-1}+h^{2k}(\mathbb{P}^n).$$ Similarly, $$h^{2k}(X)=\sum_{i=0}^{n-m_2-2}b_{k-i-1}+h^{2k}(\mathbb{P}^n).$$ So we obtain
 \begin{equation}
     \sum_{i=0}^{n-m_1-2}a_{k-i-1}=\sum_{i=0}^{n-m_2-2}b_{k-i-1}.
 \end{equation}
 Replacing $k$ by $k+1$ in $(1)$ we get
 \begin{equation}
     \sum_{i=-1}^{n-m_1-3}a_{k-i-1}=\sum_{i=-1}^{n-m_2-3}b_{k-i-1}.
 \end{equation}
 Now $(2)-(1)$ gives $a_k-a_{k-(n-m_1-1)}=b_k-b_{k-(n-m_1-1)}$ for all $k$. This proves $(ii)$.

 $(iii)$ follows from $(ii)$ as $i\leq n-m_1-2\implies i-(n-m_1-1), i-(n-m_2-1)<0$, so $a_{i-(n-m_1-1)}=b_{i-(n-m_2-1)}=0$.
 
    $(iv)$: If $m_1\geq \frac{3n-2}{4}$, we have $2(n-m_1-1)\leq 2m_1-n$. By Barth-Larsen theorem ( \cite[Theorem 3.2.1(i)]{lazarsfeld2017positivity}), $a_{n-m_1-1}=h^{2(n-m_1-1)}(Z_1)=1$. By $(ii)$, $b_{n-m_1-1}=a_{n-m_1-1}-1=0$. So by $(i)$, $m_2\leq n-m_1-2$.
\end{proof}
\begin{lemma}\label{lemma:HC}
    If $m_2\leq 2n/3$, then $a=1$.
\end{lemma}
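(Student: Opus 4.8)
The plan is to argue by contradiction: assume $a\ge 2$ and derive a contradiction from the hypothesis $m_2\le 2n/3$. Throughout write $s=n-m_2-1$ and $t=n-m_1-1$, so that $s>t\ge 1$, and in fact $s\ge 2$ since $m_2\le m_1-1\le n-3$. The hypothesis $m_2\le 2n/3$ translates into the size bound $s\ge n/3-1$, equivalently $n+1\le 3s+4$. The strategy is to pit an exponential quantity in $s$ against the polynomial $(n+1)^2$ to force $n$ into a bounded range, and then to eliminate the finitely many surviving configurations by the exact divisibility relations.

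For the first step I would invoke Lemma \ref{lemma:estimate}(ii), which gives
\[
(n+1)^2 > a^{\,s-1}\,s\,t \ge 2^{\,s-1}\,s
\]
using $a\ge 2$ and $t\ge 1$. Combined with $n+1\le 3s+4$ this yields $2^{\,s-1}s<(3s+4)^2$, an inequality that fails once $s\ge 8$. Hence $s\le 7$ and therefore $n\le 3s+3\le 24$, reducing the statement to finitely many quadruples $(n,m_1,m_2,a)$ with $a\ge 2$. (The same inequality with $a\ge 3$ reads $3^{\,s-1}s<(3s+4)^2$, which already fails for $s\ge 5$, so the range shrinks to $s\le 4$ in that case.)

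To dispose of the remaining cases I would switch from inequalities to exact arithmetic. Rewriting Lemma \ref{lemma:katz}(iii),(iv) gives $ct+s=ds+t=a(n+1)$, and setting $P:=a(n+1)-s-t$ one finds $P=t(c-1)=s(d-1)$. Feeding this into the divisibility $st\,a^{s}\mid (n+1)P$ of Lemma \ref{lemma:estimate}(i) produces the two sharp divisibilities
\[
s\,a^{s}\mid (n+1)(c-1), \qquad t\,a^{s}\mid (n+1)(d-1).
\]
On the other hand $ds+t=a(n+1)$ forces $d\le a(n+1)/s$, so $(n+1)(d-1)<a(n+1)^2/s$ is comparatively small while $t\,a^{s}$ is large; moreover Lemma \ref{lemma:EH}(iv) together with $d_2\ge 2$ gives $d>a$, further constraining the list. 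In each of the finitely many cases the divisibility $t\,a^{s}\mid(n+1)(d-1)$ (and symmetrically $s\,a^{s}\mid(n+1)(c-1)$) forces $c$ or $d$ to exceed the value permitted by $ct+s=ds+t=a(n+1)$, which is the sought contradiction; hence $a=1$.

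The main obstacle is precisely this final finite case analysis: the single inequality of Lemma \ref{lemma:estimate}(ii) only bounds $n$, and one must verify that the arithmetic genuinely eliminates every surviving configuration with $a\ge 2$. The mechanism that makes each case collapse is the tension between the strong $a$-adic divisibility $t\,a^{s}\mid(n+1)(d-1)$ and the linear relation $ds+t=a(n+1)$, but organizing the bookkeeping over all $s\le 7$, the admissible $t<s$, and the allowed values of $n$ so that no case is overlooked is the delicate part.
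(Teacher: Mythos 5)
Your proof is built on a different foundation from the paper's: the paper disposes of this lemma in one line by citing Theorem 1 of \cite{CK91} applied to $\phi$, whereas you attempt a self-contained argument using only the numerical relations of Section 2. Your first step is fine: with $s=n-m_2-1$, $t=n-m_1-1$, the hypothesis gives $n+1\le 3s+4$, and Lemma \ref{lemma:estimate}(ii) with $a\ge 2$, $t\ge 1$ gives $(3s+4)^2>2^{s-1}s$, which indeed forces $s\le 7$ and hence bounds $n$. The derived identities $ct+s=ds+t=a(n+1)$ and the divisibilities $sa^{s}\mid(n+1)(c-1)$, $ta^{s}\mid(n+1)(d-1)$ are also correct consequences of Lemmas \ref{lemma:katz} and \ref{lemma:estimate}(i).

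The gap is the final step, which you assert but do not carry out, and which in fact fails. Take $n=7$, $a=2$, $m_1=5$, $m_2=3$ (so $t=1$, $s=3$, and $m_2=3\le 2n/3$), with $c=13$, $d=5$. Then $ct+s=ds+t=16=a(n+1)$, $c>d\ge 2$, $a^{n-m_2}=16$ divides $cd-1=64$, Lemma \ref{lemma:estimate}(i) reads $24\mid 96$, Lemma \ref{lemma:divisibility} is vacuous or holds mod $3$, and your two sharp divisibilities read $3\cdot 8\mid 8\cdot 12$ and $1\cdot 8\mid 8\cdot 4$ --- all satisfied. So this configuration survives every numerical constraint available in Section 2, and no amount of bookkeeping over the finitely many cases will eliminate it by these means alone. (The paper does eventually kill this configuration, but via the Barth--Larsen/Betti-number input of Lemma \ref{lemma:betti}(iv), not via divisibility; and the lemma under review is itself an input to that later argument.) The conclusion of the lemma therefore genuinely requires the geometric content of Theorem 1 of \cite{CK91} (or some comparable topological/geometric input), and your purely arithmetic route cannot be completed.
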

\begin{proof}
    This follows from Theorem 1 of \cite{crauder1991cremona}, applied to the Cremona transformation $\phi.$
\end{proof}
\section{Proof of Theorem A}
First we prove the following.
\begin{theorem}\label{theorem:a=1}
In the notations of the setup, we have $a=1.$
\end{theorem}
\begin{proof}
    Suppose $a\geq 2$. We wish to get a contradiction. If $m_1\geq\frac{3n-2}{4}$, then by Lemma \ref{lemma:betti} $(iv)$, $m_2\leq n-m_1-2$. On the other hand, by Lemma \ref{lemma:HC}, $m_2>2n/3$. So, $2n/3<n-m_1-2\leq n-2-\frac{3n-2}{4}$, so $2n/3<\frac{n-6}{4}$, implying $5n<-18$, which is absurd. So, $m_1<\frac{3n-2}{4}$.

    We also have $m_1\geq m_2+1>2n/3+1$ by Lemma \ref{lemma:HC}. So, $2n/3+1<\frac{3n-2}{4}$, implying $n>18$.

 Note that $m_1<\frac{3n-2}{4} \implies n-m_1>\frac{n+2}{4}\implies n-m_1\geq \lceil{\frac{n+2}{4}} \rceil$. By Lemma \ref{lemma:estimate} $(ii)$, $(n+1)^2>a^{n-m_1-1}(n-m_1)(n-m_1-1)\geq a^{\lceil{\frac{n-2}{4}}\rceil}.\lceil\frac{n-2}{4}\rceil.\lceil\frac{n+2}{4}\rceil\geq 2^{\lceil{\frac{n-2}{4}}\rceil}.\lceil{\frac{n-2}{4}}\rceil. \lceil{\frac{n+2}{4}}\rceil$. So, $$(n+1)^2> 2^{\lceil{\frac{n-2}{4}}\rceil}.\left\lceil{\frac{n-2}{4}}\right\rceil. \left\lceil{\frac{n+2}{4}}\right\rceil.$$ One easily sees that this is not satisfied if $n>18$.

 This contradiction shows $a=1$.
    
\end{proof}

Now we show that the $6-$tuple $(n,a,c,d,m_1,m_2)$ can have at most $2$ possibilities.
\begin{theorem}\label{theorem:2case}
In the notations of the setup, we have
    \begin{enumerate}
        \item [either (1)] $n=4, a=1, c=3, d=2, m_1=2, m_2=1$,
     \item [or (2)] $n=9, a=1, c=3, d=2, m_1=6, m_2=4$.
    \end{enumerate}
\end{theorem}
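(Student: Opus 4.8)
The plan is to leverage $a=1$ (Theorem \ref{theorem:a=1}) to reduce Lemma \ref{lemma:katz} to clean integer equations, and then to play the resulting arithmetic off against the topological restrictions coming from Lemma \ref{lemma:betti} and the Barth--Larsen theorem.

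Set $u=n-m_1-1$ and $v=n-m_2-1$, so that $1\le u<v$. Putting $a=1$ into Lemma \ref{lemma:katz}$(iii)$ and $(iv)$ gives
\[
cu+v=n+1,\qquad dv+u=n+1,
\]
and subtracting yields the key relation $u(c-1)=v(d-1)$. Writing $g=\gcd(u,v)$, $u=gu'$, $v=gv'$ with $\gcd(u',v')=1$ and $u'<v'$, coprimality forces $c-1=v't$ and $d-1=u't$ for a single integer $t\ge1$ (here $t\ge1$ because $d\ge2$ by Lemma \ref{lemma:katz}$(v)$). Feeding this back into $cu+v=n+1$ produces the single Diophantine identity
\[
n+1=g\,(u'v't+u'+v').
\]
By itself this equation has infinitely many solutions — for instance the whole family with $c=3$, $d=2$ — so topology must do the cutting.

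Next I would pass to the even Betti numbers $a_i=h^{2i}(Z_1)$, $b_i=h^{2i}(Z_2)$. Reading equation $(1)$ in the proof of Lemma \ref{lemma:betti} at $k=u+1$ gives $a_u+\dots+a_1=b_u+\dots+b_0$; combined with $a_j=b_j$ for $j\le u-1$ (Lemma \ref{lemma:betti}$(iii)$) this collapses to the single relation $a_u=b_u+1$. I would then verify $u\le m_2$ — equivalently $gu'v't\ge2$, which is automatic since $g,u',t\ge1$ and $v'\ge2$ — so that $b_u\ge1$ by Lemma \ref{lemma:betti}$(i)$ and hence $a_u\ge2$. On the other hand the Barth--Larsen theorem, exactly as invoked in the proof of Lemma \ref{lemma:betti}$(iv)$, forces $a_u=1$ as soon as $2u\le2m_1-n$; so to avoid a contradiction we must have $2u>2m_1-n$, that is, $n\le4u+1$ after substituting $m_1=n-1-u$.

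Finally I would combine $n+1\le4u+2=4gu'+2$ with the Diophantine identity to obtain
\[
g\,(u'v't-3u'+v')\le2 .
\]
Setting $W=u'(v't-3)+v'$, one checks directly that $W=1$ precisely when $(u',v',t)=(1,2,1)$ and that $W\ge3$ for every other admissible triple; since $g\ge1$, the bound $gW\le2$ then forces $(u',v',t)=(1,2,1)$ and $g\in\{1,2\}$. This pins down $c=3$, $d=2$ and $(n,m_1,m_2)=(4,2,1)$ or $(9,6,4)$, the two asserted possibilities. I expect the crux to be the inequality $a_u\ge2$: the relation $a_u=b_u+1$ must be extracted carefully from equation $(1)$ rather than from a naive common shift, and its usefulness hinges on confirming $b_u\ne0$, i.e.\ the inequality $u\le m_2$ established above, which is exactly what creates the genuine clash with Barth--Larsen.
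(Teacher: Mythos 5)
Your proposal is correct, and it reaches the two cases by a genuinely different route than the paper. The paper first excludes $m_1\ge\frac{3n-2}{4}$ by combining Lemma \ref{lemma:betti}$(iv)$ with the formula for $d$, then proves $c\le 3$ through a separate bound that drops to $n\le 5$ and is finished off case-by-case using Lemma \ref{lemma:divisibility}, and finally extracts $n\equiv -1\ (\mathrm{mod}\ 5)$ and $n<14$ from $m_1=\frac{4n-6}{5}$, $m_2=\frac{3n-7}{5}$. You instead parametrize the full solution set of the numerical system by $(g,u',v',t)$ via $u(c-1)=v(d-1)$, and replace the paper's conditional use of Barth--Larsen with an unconditional one: the identity $a_u=b_u+1$ (correctly extracted from equation $(1)$ at $k=u+1$ --- note the displayed statement of Lemma \ref{lemma:betti}$(ii)$ has a typo, the second shift should be $n-m_2-1$, so your caution here is warranted) together with $u\le m_2$ forces $a_u\ge 2$, so Barth--Larsen must fail and $n\le 4u+1$. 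That single inequality collapses the Diophantine family to $g\bigl(u'(v't-3)+v'\bigr)\le 2$, whose only solutions are $(u',v',t)=(1,2,1)$ with $g\in\{1,2\}$; I checked that $W=u'(v't-3)+v'$ indeed equals $1$ only for $(1,2,1)$ and never equals $2$. Your argument buys a cleaner, more uniform proof that dispenses with Lemma \ref{lemma:divisibility} and the $n=5$ case check; the paper's version keeps the topological input confined to the single threshold $m_1\ge\frac{3n-2}{4}$ and leans more heavily on explicit inequalities. Both rest on the same two pillars, Lemma \ref{lemma:katz} with $a=1$ and the Betti-number comparison of $Z_1$ and $Z_2$.
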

\begin{proof}
    By Theorem \ref{theorem:a=1}, $a=1$. If $m_1\geq\frac{3n-2}{4}$, by Lemma \ref{lemma:betti} $(iv)$, $m_1+m_2\leq n-2$, and we have $$2\leq d<d+\frac{n-m_1-1}{n-m_2-1}=\frac{n+1}{n-m_2-1}\text{ (by Lemma \ref{lemma:katz}} (iv))$$ $$\leq\frac{n+1}{m_1+1}\leq\frac{n+1}{\frac{3n-2}{4}+1}=\frac{4n+4}{3n+2}.$$
    So, $$4n+4>2(3n+2)=6n+4\implies n<0,$$ a contradiction. So, $m_2<m_1<\frac{3n-2}{4}$.

    By Lemma \ref{lemma:katz} $(iii), (iv)$,
    \begin{equation}
        c=\frac{m_2+2}{n-m_1-1}, d=\frac{m_1+2}{n-m_2-1}.
    \end{equation}
    \textbf{Claim:} $c=3$, $d=2$.
    \begin{proof}
        By Lemma \ref{lemma:katz} $(v)$, it suffices to show $c\leq 3$. Suppose $c\geq 4$. By $(3)$, $m_2+2\geq 4(n-m_1-1)$, so $4n-6\leq m_2+4m_1<\frac{3n-2}{4}-1+3n-3$ (as $m_2\leq m_1-1<\frac{3n-2}{4}-1, 4m_1<3n-2\implies 4m_1\leq 3n-3$) $=15n/4-9/2$.

        Hence, $4n-6<15n/4-9/2$, that is, $n\leq 5$.

        If $n=5$, then $(m_2,m_1)=(1,2), (1,3)$ or $(2,3)$. In each case either $c$ or $d$ is not an integer, a contradiction.

        If $n=4$, we must have $m_1=2, m_2=1$. So, $c=3$ by $(3)$. Contradiction to $c\geq 4$.

        This contradiction shows $c\leq 3$.
    \end{proof}

    Now $(3)$ and claim gives 
    \begin{equation}
        m_1=\frac{4n-6}{5}, m_2=\frac{3n-7}{5}.
    \end{equation}
    Since $m_1, m_2$ are integers, we get $n\equiv -1(\text{mod } 5)$. Also, $$m_1<\frac{3n-2}{4}\implies \frac{4n-6}{5}<\frac{3n-2}{4}\implies n<14.$$ Together with $n\equiv -1(\text{mod } 5)$, we get $n=4$ or $9$.
    If $n=4$, then $m_1=2, m_2=1$ by $(4)$, so case $1)$ occurs. If $n=9$, then $m_1=6, m_2=4$ by $(4)$, so case $2)$ occurs. 
\end{proof}
\begin{proof}[Proof of Theorem \ref{A}]
By Theorem \ref{theorem:2case} we are in case $1)$ or $2)$. If we are in case $1)$, then $\phi, \phi^{-1}$ are simple Cremona transformations with dim Bs$(\phi)=1$, dim Bs$(\phi^{-1})=2$. By \cite{crauder1989cremona}, $\phi$ is a quadro-cubic transformation. So it suffices to show that case $2)$ cannot occur.

Suppose case $2)$ occurs. We want to derive a contradiction. Let $x=H_2^3E_2^6,$ $ y=H_2^2E_2^7,$ $ z=H_2E_2^8,$ $ w=E_2^8.$

\textbf{Claim 1:} $\begin{pmatrix}
    x \\
    y \\
    z \\
    w
\end{pmatrix}=-\begin{pmatrix}
    37-12d_2+d_1 \\
    399-84d_2+14d_1 \\
    2493-448d_2+112d_1 \\
    11771-2016d_2+672d_1
\end{pmatrix}$
\begin{proof}
    Using Lemma \ref{lemma:EH} $(i), (ii)$, the equations $$1=H_1^9=(2H_2-E_2)^9,$$ $$0=H_1^8E_1=(2H_2-E_2)^8(5H_2-3E_2),$$ $$0=H_1^7E_1^2=(2H_2-E_2)^7(5H_2-3E_2)^2,$$ $$d_1=H_1^6E_1^3=(2H_2-E_2)^6(5H_2-3E_2)^3,$$ we get
    $$672x-144y+18z-w=-511+2016d_2,$$ $$1904x-416y+53z-3w=-1280+5600d_2,$$ $$5390x-1201y+156z-9w=-3200+15540d_2,$$ $$15245x-3465y+459z-27w=-8000+43080d_2+d_1.$$
In matrix notations,
 $\begin{pmatrix}
    672 & -144 & 18 & -1 \\
    1904 & -416 & 53 & -3 \\
    5390 & -1201 & 156 & -9 \\
    15245 & -3465 & 459 & -27
\end{pmatrix}\begin{pmatrix}
    x \\
    y \\
    z \\
    w
\end{pmatrix}=-\begin{pmatrix}
    511-2016d_2 \\
    1280-5600d_2 \\
    3200-15540d_2 \\
    8000-43080d_2-d_1
\end{pmatrix}$. So, $$\begin{pmatrix}
    x \\
    y \\
    z \\
    w
\end{pmatrix}=-\begin{pmatrix}
    27 & -27 & 9 & -1 \\
    369 & -372 & 125 & -14 \\
    2883 & -2929 & 992 & -112 \\
    16901 & -17298 & 5904 & -672
\end{pmatrix}\begin{pmatrix}
    511-2016d_2 \\
    1280-5600d_2 \\
    3200-15540d_2 \\
    8000-43080d_2-d_1
\end{pmatrix}$$ $$=-\begin{pmatrix}
    37-12d_2+d_1 \\
    399-84d_2+14d_1 \\
    2493-448d_2+112d_1 \\
    11771-2016d_2+672d_1
\end{pmatrix}$$.
\end{proof}
\textbf{Claim 2:}
 $b_i=1$ for all $0\leq i \leq 4$.
 \begin{proof}
     By Lemma \ref{lemma:betti} $(ii)$,
     \begin{equation}
         a_i-a_{i-2}=b_i-b_{i-4}.
     \end{equation}
     By Barth-Larsen theorem(Theorem $3.2.1(i)$ in \cite{lazarsfeld2017positivity}), $a_1=1$. So by Lemma \ref{lemma:betti} $(iii)$, $b_1=1$. By Poincare duality, $a_5=b_3=1$. Clearly, $a_0=a_6=b_0=b_4=1.$ By $(5)$, $a_2=1+b_2$, $a_3=1+b_3=2$. By Hard Lefschetz theorem (Corollary $3.1.40$ in \cite{lazarsfeld2017positivity}), the map $H^4(Z_1,\mathbb{C})\to H^6(Z_1,\mathbb{C})$ given by multiplication by a Kahler class in $H^2(Z_1,\mathbb{C})$ is injective, so $a_2\leq a_3$. This shows $b_2\leq 1.$ Since $b_2\neq 0$ by Lemma \ref{lemma:betti} $(i)$, we get $b_2=1$.
 \end{proof}

 By abuse of notation denote $c_1(\mathcal{O}_{Z_2}(1))\in H^2(Z_2,\mathbb{Z})$ by $H_2$. So $H_2^i$ is a nonzero element of $H^{2i}(Z_2,\mathbb{Z})/torsion\cong \mathbb{Z} $ for $1\leq i\leq 4.$ So there are $l_i\in \mathbb{N}$, $i=1,2,3,4$ such that $H_2^i/l_i$ is a generator of $H^{2i}(Z_2,\mathbb{Z})/torsion$. Let $\alpha=l_1,$ $k=l_2$. We know $l_4=d_2$. By Poincare duality, $H_2/l_1 . H_2^3/l_3=\pm H_2^4/l_4$, $(H_2^2/l_2)^2=\pm H_2^4/l_4$. This forces $l_1 l_3=l_4=l_2^2$. So, $l_3=k^2/\alpha$, $d_2=k^2$. Also, $(H_2/\alpha)^2=H_2^2/\alpha^2$ is an integral multiple of $H_2^2/k$, so $\alpha^2|k$. Write $k=\alpha^2\beta,$ $\beta\in \mathbb{N}$. So, $d_2=\alpha^4\beta^2$.
 
 \textbf{Claim 3:} \begin{enumerate}
     \item [(i)] $\alpha^3\beta^2|x$, $\alpha^2\beta|y$.
     \item [(ii)] $\alpha=1$, $\beta=7,17$ or $7\times 17$.
 \end{enumerate}
 \begin{proof}
     $(i):$ Let $N_2$ be the normal bundle of $Z_2$ in $\mathbb{P}^9$, so $E_2=\mathbb{P}(N_2^*)$. Let $c_1,c_2,c_3,c_4\in \mathbb{Q}$ be such that $c_i(N_2^*)=c_iH_2^i$ in $H^*(Z_2,\mathbb{Q})$.  Let $v=c_1(\mathcal{O}_{\mathbb{P}(N_2^*)}(1))\in H^2(E_2,\mathbb{Z})$. By Lemma \ref{lemma:leray}, $$v^5-c_1H_2v^4+c_2H_2^2v^3-c_3H_2^3v^2+c_4H_2^4v=0$$ in $A^*(E_2)\otimes \mathbb{Q}$, hence in $H^*(E_2,\mathbb{Q}) $ by the natural ring homomorphism $A^*(E_2)\otimes \mathbb{Q} \to H^*(E_2,\mathbb{Q}) $ (\cite{fulton2013intersection}, chapter 19). Here $H^*(E_2,\mathbb{Q})$ is considered as a $H^*(Z_2,\mathbb{Q})$-algebra by $q^*$. We have $x=H_2^3E_2^6=-v^5H_2^3=-c_1v^4H_2^4=-c_1E_2^5H_2^4=-c_1d_2$. So, 
     \begin{equation}
         c_1=-x/d_2.
     \end{equation}
     We also have $y=H_2^2E_2^7=v^6H_2^2=c_1v^5H_2^3-c_2v^4H_2^4=-c_1x-c_2d_2=x^2/d_2-c_2d_2.$ So,
     \begin{equation}
         c_2=x^2/d_2^2-y/d_2.
     \end{equation}
     Since $c_i(N_2^*)$ is an integral multiple of $H_2^i/l_i$, we get $c_1\alpha, c_2\alpha^2\beta\in\mathbb{Z}$. So by $(6)$, $\mathbb{Z}\ni \frac{\alpha x}{d_2}=\frac{\alpha x}{\alpha^4\beta^2}=\frac{x}{\alpha^3\beta^2} $. So, $\alpha^3\beta^2|x.$
     
     By $(7)$, $\mathbb{Z}\ni \frac{\alpha^2\beta x^2}{\alpha^8\beta^4}-\frac{\alpha^2\beta y}{\alpha^4\beta^2}=\frac{x^2}{\alpha^6\beta^3}-\frac{y}{\alpha^2\beta}$.

     Since $\alpha^3\beta^2|x$, we have $\alpha^6\beta^3|\alpha^6\beta^4|x^2$. So, $\alpha^2\beta|y$. This proves $(i)$.

     $(ii):$ By $(i)$, $\alpha^2\beta|x,y,d_2$. So by Claim 1, $d_1\equiv -37(\text{mod } \alpha^2\beta)$, $14d_1\equiv-399(\text{mod } \alpha^2\beta)$. So, $399\equiv-14d_1\equiv 14\times 37=518(\text{mod } \alpha^2\beta) $. Hence $\alpha^2\beta|518-399=119=7\times 17$. This forces $\alpha=1$. So $d_2=\alpha^4\beta^2=\beta^2$. If $\beta=1$, then $d_2=1$, a contradiction. So, $\beta=7,17$ or $7\times 17$.
 \end{proof}

 Now we are ready to get a contradiction. By Lemma \ref{lemma:EH} $(iv)$, $d_2<32$. But by Claim 3 $(ii)$, $d_2=\beta^2\geq 7^2=49$, a contradiction.
\end{proof}
\section{Acknowledgement}
The author is grateful to Professor János Kollár for giving valuable references.

\printbibliography
\end{document}